\documentclass[12pt,a4paper]{amsart}

\usepackage{amsmath,amssymb,amsthm,amsfonts}
\usepackage{mathabx}
\usepackage{bm}
\usepackage[shortlabels]{enumitem}
\usepackage[bookmarks=true,hyperindex,pdftex,colorlinks,citecolor=red, linkcolor=blue]{hyperref}
\usepackage{centernot} 
\usepackage{marginnote} 
\usepackage{bbm}
\usepackage[all]{xy}
\usepackage{tikz}
\usetikzlibrary{arrows}
\usetikzlibrary{shapes,decorations}
\usetikzlibrary{positioning}
\usepackage{tikz-cd}
\usepackage{scalerel}
\usepackage{microtype}
\usepackage{stackengine,wasysym}

\newcommand{\N}{\mathbb{N}}             
\newcommand{\C}{\mathbb{C}}             

\newcommand{\D}{\mathbb{D}}

\newcommand{\Hinf}{\mathcal{H}^{\infty}}                              

\def\<{\langle}
\def\>{\rangle}

\newcommand{\Hi}{\mathcal{H}^\infty}

\theoremstyle{plain}
\newtheorem{theorem}{Theorem}[section]
\newtheorem{lemma}[theorem]{Lemma}
\newtheorem{corollary}[theorem]{Corollary}
\newtheorem{proposition}[theorem]{Proposition}

\theoremstyle{definition}
\newtheorem*{definition*}{Definition}

\newtheorem{example}[theorem]{Example}

\theoremstyle{remark}
\newtheorem{remark}[theorem]{Remark}

\begin{document}

\title{A note on differentials of holomorphic functions}

\author[R. M. Aron]{Richard Aron}
\address{Department of Mathematical Sciences, Kent State University, Kent, OH 44242
USA} \email{aron@math.kent.edu}

\author[V. Dimant]{Ver\'onica Dimant}
\address{Departamento de Matem\'{a}tica y Ciencias, Universidad de San
Andr\'{e}s, Vito Dumas 284, (B1644BID) Victoria, Buenos Aires,
Argentina and CONICET} \email{vero@udesa.edu.ar}

\author[M. Maestre]{Manuel Maestre}
\address{Departamento de An\'{a}lisis Matem\'{a}tico, Universidad de Valencia, Doctor Moliner 50, 46100 Burjasot,
Valencia, Spain
}
\email{manuel.maestre@uv.es}

\keywords{Banach space; holomorphic function; Lipschitz function; differential}

\subjclass[2020]{
Primary
46E15,  
46E50;  
Secondary
46B20,  
}

\maketitle

\begin{abstract} 
Recently, in  \cite{ADGM},  a bridge was made between the very active area of spaces  of Lipschitz real functions on a metric space and holomorphic functions on an open subset of a Banach space. This was done by introducing  and studying the space $\mathcal HL_0(B_X)$ of holomorphic Lipschitz functions defined on $B_X$, the open unit ball of the complex Banach space $X$ vanishing at 0. There it was proved that this space is isometrically isomorphic to a subspace of   $\mathcal H^\infty(B_X, X^*)$, the space of bounded holomorphic mapping with values in the topological dual of $X$. In that paper it was  shown that this subspace was a proper one, except in the one dimensional case. The goal of this note is to give an intrinsic characterization of the elements of that subspace. Moreover, in the case where $X$ additionally has a Schauder basis, it is shown that there is an explicit way to calculate whether and element of $\mathcal H^\infty(B_X, X^*)$ belongs or not to that subspace.
    
\end{abstract}

\section{Introduction}

In \cite{ADGM}, the authors study the space $\mathcal HL_0(B_X)$ of holomorphic Lipschitz functions defined on $B_X$, the open unit ball of the complex Banach space $X$, that take the value 0 at 0, endowed with the Lipschitz norm. This space  is shown to be isometric to a subspace of $\mathcal H^\infty(B_X, X^*)$, the space of bounded holomorphic mappings from $B_X$ to $X^*$ with the sup norm. This isometry is given by
\begin{align*}
	\Phi\colon \mathcal HL_0(B_X) & \to \Hinf (B_X,X^*)\\
	f & \mapsto df.
	\end{align*}
It is also shown in \cite{ADGM} that $\Phi$ is surjective if and only if $X=\mathbb C$.  The purpose of this note is to describe the image of this mapping $\Phi$ by means of  intrinsic characterizations. 
In Section 2, we begin by considering the finite dimensional case, i.e. whenever $X=\mathbb{C}^n$ endowed with any norm. In that case the desired characterization is a consequence of Theorem \ref{Poincare}, which provides an elementary  proof of a complex version of the classical real Poincare Lemma which states that the necessary condition for a field to be conservative is actually sufficient  for any star-open subset of $\mathbb{R}^n$. In a more precise way, the real Poincare Lemma is the case $k=1$ of the result  that states that any real closed  $k$-form in a star-shaped open  subset of $\mathbb{R}^n$ is exact (see e.g. \cite[Theorem 8.1, p. 395]{Edwards}).  
In Section 3,  we discuss the problem for a general Banach space $X$. By means of describing the differential operator $d$ of homogeneous polynomials  in Theorem \ref{thm:dif-HL0}, we prove  that  $\Phi(\mathcal HL_0(B_X))=\mathcal H^\infty_s(B_X,X^*)$ where $\mathcal H^\infty_s(B_X,X^*)=\{ g\in \mathcal H^\infty(B_X,X^*)\,:\, y\circ d(x\circ g)= x\circ d(y\circ g),\, \forall x,y\in X\}.$ Moreover, in Corollaries
\ref{characterizacionwithSchauderbasis} and \ref{characterizacionwithSchauderbasisII}, whenever $X$ has a Schauder basis we produce an explicit description of $\mathcal H^\infty_s(B_X,X^*)$, that  links the finite dimensional case with the general one. 
Finally, we state vector-valued versions of the previous  scalar-valued results.

\subsubsection*{Basic definitions}

Given $X, Y$  complex Banach spaces, we denote by $\mathcal L(X, Y)$ (respectively, $\mathcal L_s(X, Y)$) the space of continuous linear maps from $X$ to $Y$ (respectively, symmetric continuous linear maps). A mapping $P:X\to Y$ is a \emph{continuous $m$-homogeneous polynomial} if there exists a continuous  $m$-linear symmetric map $\widecheck{P}\colon X\times \cdots \times X\to Y$ with $P(x)=\widecheck{P}(x,\ldots, x)$. We use the notation $\mathcal P(^m X, Y)$ (respectively, $\mathcal L_s(^m X, Y)$) for the Banach space of continuous $m$-homogeneous polynomials from $X$ to $Y$ (respectively, continuous  $m$-linear symmetric maps from $X\times \cdots \times X$ to $Y$) endowed with the sup norm $\|P\|=\sup_{x\in B_X} \|P(x)\|$ (respectively, $\|A\|=\sup_{x_1,\dots, x_m\in B_X} \|A(x_1,\dots, x_m)\|$). 

If $U\subset X$ is an open set, a mapping $f:U\to Y$ is said to be \emph{holomorphic} if for every $x_0\in U$ there exists a sequence $(P_{m}f(x_0))$, with each $P_{m}f(x_0)$ a continuous $m$-homogeneous polynomial, such that the series
\[
f(x)=\sum_{m=0}^\infty P_{m}f(x_0)(x-x_0)
\] converges uniformly in some neighborhood of $x_0$ contained in $U$.

Equivalently, for every $x_0\in U$, the function $f$ is \emph{Fr\'{e}chet differentiable} at $x_0$;  that is,
there exists a continuous linear mapping $df(x_0)$ from $X$ into $Y$ called the differential of $f$ at $x_0$, such that
$$
\lim_{h\to 0} \frac{f(x_0 +h)-f(x_0) -df(x_0)(h)}{\|h\|}=0.
$$

The set
$\mathcal H^\infty(B_X,Y)=\{f:B_X\to Y:\, f \textrm{ is holomorphic and bounded}\}$ is a Banach space if we endow it with the sup norm $\|f\|=\sup_{x\in B_X} \|f(x)\|$. 

The space of holomorphic Lipschitz functions is
\[
\mathcal HL_0(B_X,Y)=\{f\in \mathcal H^\infty(B_X,Y):\, f \textrm{ is Lipschitz and } f(0)=0\}
\] which is a Banach space with the norm
\[
L(f)=\sup\left\{ \frac{\|f(x)-f(y)\|}{\|x-y\|}:\ x\neq y \in B_X\right\}.
\]
From \cite[Proposition 2.1]{ADGM} we have the following equality
\[
L(f)=\|df\|=\sup_{x\in B_X} \|df(x)\|.
\]
For all the considered spaces of functions, when $Y=\mathbb C$ we omit it in the notation; i.e. we write $\mathcal P(^m X)$, $\mathcal L_s(^m X)$, $\mathcal H^\infty(B_X)$ and $\mathcal HL_0(B_X)$ instead of $\mathcal P(^m X,\mathbb C)$, $\mathcal L_s(^m X,\mathbb C)$, $\mathcal H^\infty(B_X,\mathbb C)$ and $\mathcal HL_0(B_X,\mathbb C)$.
We refer the reader to \cite{Dineen,MuLibro} for basic properties about polynomials and holomorphic functions on infinite dimensional spaces used throughout this note.

\section{Finite-dimensional case}

For any open star-shaped subset $U$ of $X=\C^n$, we characterize in Theorem \ref{Poincare}
 when a holomorphic function $F:U\to\C^n$ is the differential of another holomorphic mapping  $g:U\to\C$.
Actually, whenever  $U$ is a polydisc or an open pseudoconvex domain in $\C^n$, this theorem is a consequence of the   Dolbeault–Grothendieck lemma (also called the $\Bar{\partial}$-Poincaré lemma) for strongly pseudoconvex manifolds, i.e for Stein manifolds, (see \cite{Nickerson} and \cite{Aeppli}). Instead, we  present a direct elementary proof for the case in which  $U$ is   an open star-shaped subset of $\C^n$. To prove it, we need first to develop the following lemma, which is a version on open star-shaped  domains in $\mathbb{C}^n$   of the classical  of Poincare's Lemma in the real field. Perhaps the lemma is already known but we did not find it in the literature.

\begin{lemma}\label{lema 1}
   Let $U\subset \C^n$ be an open star-like set with respect to a point $a\in U$ and let $f:U\to\C$ be a holomorphic function.  Then, the mapping $F=(F_1,\dots , F_n):U\to\C^n$ defined by
   \[
   F_j(z)=\int_0^1 f(a+t(z-a)) (z_j-a_j)\, dt \quad \forall  j=1,\dots, n,
   \]  is differentiable with partial derivatives given by
   \[
   \frac{\partial F_j}{\partial z_k}(z)= \int_0^1 \frac{\partial f}{\partial z_k}\big(a+t(z-a)\big)t (z_j-a_j)\, dt
   \] for all $k\in\{1,\dots, n\}\setminus \{j\}$ and
   \[
   \frac{\partial F_j}{\partial z_j}(z)= \int_0^1\Big( \frac{\partial f}{\partial z_j}\big(a+t(z-a)\big)t (z_j-a_j)+  f(a+t(z-a))\Big)\, dt.
   \] 
\end{lemma}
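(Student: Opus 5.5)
The plan is to differentiate under the integral sign, using compactness of the segment $[a,z]$ and the openness of $U$ to get uniform bounds. Write $F_j(z)=(z_j-a_j)\,G(z)$ with
\[
G(z)=\int_0^1 f(a+t(z-a))\,dt .
\]
By the product rule it then suffices to show that $G$ is holomorphic on $U$ with
\[
\frac{\partial G}{\partial z_k}(z)=\int_0^1 \frac{\partial f}{\partial z_k}\big(a+t(z-a)\big)\,t\,dt .
\]
The stated formulas follow at once: for $k\neq j$ the factor $z_j-a_j$ has zero $z_k$-derivative, and for $k=j$ the derivative of $z_j-a_j$ contributes the extra summand $\int_0^1 f(a+t(z-a))\,dt$.

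To handle $G$, fix $z_0\in U$. Since $U$ is star-like with respect to $a$, the set $K=\{a+t(z_0-a): t\in[0,1]\}$ is a compact subset of $U$. Choose $r>0$ such that the closed $2r$-neighbourhood $K_{2r}$ of $K$ (in the sup norm of $\C^n$, say) is still contained in $U$. For $\|z-z_0\|<r$ and $t\in[0,1]$ we have
\[
\|a+t(z-a)-(a+t(z_0-a))\|=t\|z-z_0\|<r,
\]
so every point $a+t(z-a)$ lies in $K_r\subset U$. The integrand $\phi(t,z)=f(a+t(z-a))$ is continuous on $[0,1]\times B(z_0,r)$ and holomorphic in $z$ for each fixed $t$. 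Its partial derivative $\partial\phi/\partial z_k=t\,\frac{\partial f}{\partial z_k}(a+t(z-a))$ is bounded on this set, because $\partial f/\partial z_k$ is continuous on the compact set $K_r$.

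The standard theorem on differentiating a parameter integral with respect to a parameter (continuous integrand, continuous and bounded partial derivative, compact interval of integration) then gives the formula for $\partial G/\partial z_k$. It also shows that these partial derivatives are continuous in $z$, so $G$ is $C^1$ and complex-differentiable in each variable, hence holomorphic.

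Alternatively, one can argue purely through holomorphy. A uniform limit of Riemann sums of holomorphic functions is holomorphic, so $G$ is holomorphic. The complex partial derivatives can then be computed via the one-variable Cauchy integral formula and Fubini, which commute with $\int_0^1$ because the integrand is jointly continuous on a compact set.

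The only delicate point, which I take to be the main obstacle, is to make sure the segments $[a,z]$ stay uniformly inside a compact subset of $U$ as $z$ varies near $z_0$. This is exactly what the choice of $r$ above guarantees, using star-likeness for the segment through $z_0$ and compactness of $K$ for the uniform margin.
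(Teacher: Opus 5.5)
Your proof is correct and uses the same idea as the paper's: differentiate under the integral sign, justified by bounding $\partial f/\partial z_k$ on a compact subset of $U$ that contains all segments $[a,z]$ for $z$ near the base point. The paper checks the cases $k\neq j$ and $k=j$ separately by hand, using the mean value theorem and dominated convergence on the difference quotients of $F_j$. Your factorization $F_j=(z_j-a_j)G$ with the product rule handles both cases at once. Your $C^1$/Cauchy--Riemann step (or the Riemann-sum alternative) also establishes the joint complex differentiability asserted in the statement, which the paper's argument leaves implicit because it only computes the partial derivatives.
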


\begin{proof}
Fix $z\in U$ and consider any norm in $\C^n$. Chose $\delta>0$ such that $B(z,\delta)\subset U$. In the compact set 
\[
K=\left\{a+t(u-a):\, t\in [0,1], u\in\overline{B}\big(z,\frac{\delta}{2}\big)\right\}\subset U
\] both $f$ and $\frac{\partial f}{\partial z_k}$ (for any $k=1,\dots, n$) are bounded by a constant $M>0$.

We deal first with the case $k\in\{1,\dots, n\}\setminus \{j\}$: for any  $0<|h|<\frac{\delta}{2}$ we have
\begin{multline*}
  \frac{F_j(z+he_k)-F_j(z)}{h} - \int_0^1 \frac{\partial f}{\partial z_k}\big(a+t(z-a)\big)t(z_j-a_j)\, dt=\\
 = \int_0^1 \left[\frac{f(a+t(z+he_k-a))-f(a+t(z-a))}{h}-\frac{\partial f}{\partial z_k}\big(a+t(z-a)\big)t\right](z_j-a_j)\, dt.
\end{multline*}
 We now take any sequence $(h_m)$ with $0<|h_m|<\frac{\delta}{2}$ for all $m$ such that $h_m\to 0$ and define functions $g_m:(0,1]\to\C$ by
    \[
    g_m(t)=\left[\frac{f(a+t(z+h_me_k-a))-f(a+t(z-a))}{t\cdot h_m}-\frac{\partial f}{\partial z_k}\big(a+t(z-a)\big)\right]t.
    \]
Then,  $g_m(t)\to 0$ for every $t\in (0,1]$. Also, by applying the  mean value theorem their to the real and imaginary parts, it follows that $(g_m)$ is a bounded set on $(0,1]$. 

Finally, by the Lebesgue dominated convergence theorem (applied again to the real and imaginary parts of $(g_m)$) and the sequential characterization of limits, we have that  $ \frac{\partial F_j}{\partial z_k}(z)$ exists and
\[
 \frac{\partial F_j}{\partial z_k}(z)= \lim_{h\to 0} \frac{F_j(z+he_k)-F_j(z)}{h}=\int_0^1 \frac{\partial f}{\partial z_k}\big(a+t(z-a)\big)t (z_j-a_j)\, dt.
\]

Now we deal with the case $k=j$: again we consider  any  $0<|h|<\frac{\delta}{2}$, then
\begin{align*}
 &\frac{F_j(z+he_j)-F_j(z)}{h} - \int_0^1 \Big(\frac{\partial f}{\partial z_j}\big(a+t(z-a)\big)t(z_j-a_j) + f(a+t(z-a))\Big)\, dt\\
 &\quad = \int_0^1 \frac{f(a+t(z+he_j-a))(z_j+h-a_j)-f(a+t(z-a))(z_j-a_j)}{h}\, dt \\
 &\qquad\quad -\int_0^1 \Big(\frac{\partial f}{\partial z_j}\big(a+t(z-a)\big)t(z_j-a_j) + f(a+t(z-a))\Big)\, dt\\
 &\quad =\int_0^1 \Big(\frac{\big[f(a+t(z+he_j-a))-f(a+t(z-a))\big](z_j-a_j)}{h}-\frac{\partial f}{\partial z_j}\big(a+t(z-a)\big)t(z_j-a_j)\Big)\, dt\\
 &\qquad\quad + \int_0^1\big( f(a+t(z+he_j-a)))-f(a+t(z-a))\big)\, dt.
\end{align*}
Another application of the dominated convergence theorem implies that there exists 
\[
\lim_{h\to 0}\int_0^1 \big(f(a+t(z+he_j-a)))-f(a+t(z-a))\big)\, dt =0.
\]
On the other hand, for $0<t\le 1$,
\[
\lim_{h\to 0} \frac{f(a+t(z+he_j-a))-f(a+t(z-a))}{t\cdot h}=\frac{\partial f}{\partial z_j}\big(a+t(z-a)\big).
\]
Repeating the arguments of the previous case, the conclusion follows.
\end{proof}
\begin{theorem} \label{Poincare}
    Let $U\subset \C^n$ be an open star-shaped set with respect to a point $a\in U$ and let $F=(F_1,\dots , F_n):U\to\C^n$ be a holomorphic function. Then, $\frac{\partial F_j}{\partial z_k}(z)=\frac{\partial F_k}{\partial z_j}(z)$ for all $z\in U$ and  $j,k=1,\dots n$, if and only if there exists $g:U\to\C$ holomorphic on $U$ such that $dg=F$ and $g(a)=0$.
\end{theorem}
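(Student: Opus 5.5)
The forward direction is the easy one. If $g$ is holomorphic with $dg=F$, then $F_j=\frac{\partial g}{\partial z_j}$. Since $g$ is holomorphic it is $C^2$ in the real sense (indeed $C^\infty$), so the mixed complex partial derivatives commute: $\frac{\partial F_j}{\partial z_k}=\frac{\partial^2 g}{\partial z_k\partial z_j}=\frac{\partial^2 g}{\partial z_j\partial z_k}=\frac{\partial F_k}{\partial z_j}$. The normalization $g(a)=0$ plays no role here.

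For the converse, I would define the candidate primitive by the classical radial formula
\[
g(z)=\sum_{j=1}^n\int_0^1 F_j\big(a+t(z-a)\big)(z_j-a_j)\,dt ,
\]
which is well defined because $U$ is star-shaped with respect to $a$, and which clearly satisfies $g(a)=0$. Each summand is exactly of the form treated in Lemma \ref{lema 1}, with $f=F_j$. The lemma therefore gives, for each fixed $k$, that $\frac{\partial g}{\partial z_k}$ exists and equals
\[
\int_0^1\Big(\sum_{j=1}^n \frac{\partial F_j}{\partial z_k}\big(a+t(z-a)\big)\,t\,(z_j-a_j)+F_k\big(a+t(z-a)\big)\Big)dt.
\]
Here the extra term $F_k$ comes from the case $j=k$ in the lemma.

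Next I would use the hypothesis $\frac{\partial F_j}{\partial z_k}=\frac{\partial F_k}{\partial z_j}$ to rewrite the sum as $\sum_j \frac{\partial F_k}{\partial z_j}(a+t(z-a))\,t(z_j-a_j)$. By the chain rule for the holomorphic function $F_k$ along the segment, this equals $t\,\frac{d}{dt}\big[F_k(a+t(z-a))\big]$. The integrand therefore becomes $\frac{d}{dt}\big[t\,F_k(a+t(z-a))\big]$. The map $t\mapsto tF_k(a+t(z-a))$ is $C^1$ on $[0,1]$, so the fundamental theorem of calculus (applied to real and imaginary parts) gives $\frac{\partial g}{\partial z_k}(z)=F_k(z)$.

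It remains to show that $g$ is holomorphic with $dg=F$. The partial derivatives $\frac{\partial g}{\partial z_k}=F_k$ exist at every point and are continuous, since $F$ is holomorphic. A function on an open subset of $\C^n$ whose complex partial derivatives all exist and are continuous is complex Fréchet differentiable, and hence holomorphic (alternatively, separately holomorphic plus continuous suffices by Osgood's lemma, or Hartogs). Its differential is $dg(z)(w)=\sum_k F_k(z)w_k$, which is $F$ under the usual identification of $(\C^n)^*$ with $\C^n$.

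I expect the only real obstacle to be the justification of differentiation under the integral sign, and Lemma \ref{lema 1} already handles that. What is left is the exact-derivative identity in the second paragraph, which needs some care about which index the symmetry hypothesis is applied to.
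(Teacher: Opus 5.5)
Your proposal is correct and follows the paper's proof essentially step for step. You use the same radial primitive $g(z)=\int_0^1\sum_j F_j(a+t(z-a))(z_j-a_j)\,dt$, Lemma~\ref{lema 1} to differentiate under the integral, the symmetry hypothesis to recognize the integrand as $\frac{d}{dt}\big(tF_k(a+t(z-a))\big)$, and symmetry of second derivatives for the converse. Your explicit argument that $g$ is holomorphic (continuous complex partials give complex Fr\'echet differentiability) is a step the paper leaves implicit when it writes ``Hence, $dg=F$''.
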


\begin{proof}
If the holomorphic mapping $F$ satisfies $\frac{\partial F_j}{\partial z_k}(z)=\frac{\partial F_k}{\partial z_j}(z)$ for all $z\in U$, we define $g:U\to\C$ by
    \[
    g(z)= \int_0^1 \langle F(a+t(z-a)), \overline{(z-a)}\rangle \, dt=\int_0^1 \sum_{j=1}^n F_j(a+t(z-a)) (z_j-a_j)\, dt.
    \]
  It is clear that $g(a)=0$. By Lemma \ref{lema 1} and the hypothesis, for all $z\in U$ and $k=1,\dots, n$,

\begin{align*}
    \frac{\partial g}{\partial z_k}(z) &=
\sum_{j\not= k} (z_j-a_j) \int_0^1
\frac{\partial F_j}{\partial z_k}(a+t(z-a)) t\, dt \\
&\quad  + \int_0^1
\frac{\partial F_k}{\partial z_k}(a+t(z-a)) t (z_k-a_k) + F_k(a+t(z-a)) \, dt \\
&= \int_0^1 \left[\sum_{j=1}^n(z_j-a_j)\frac{\partial F_k}{\partial z_j}(a+t(z-a)) t + F_k(a+t(z-a))  \right] \, dt\\
&= \int_0^1 \frac{d}{dt}\big(F_k (a+t(z-a))t \big)\, dt = F_k(z)\cdot 1- F_k(a)\cdot 0= F_k(z).
\end{align*}
Hence, $dg=F$.

Conversely, suppose that there is a holomorphic mapping $g:U\to\C$ such that $g(a)=0$ and $dg(z)=\big(\frac{\partial g}{\partial z_1}(z),\dots, \frac{\partial g}{\partial z_n}(z)\big)=F(z)$.  Since $g$ is $C^\infty$, it should satisfy the symmetry of the second derivatives: $\frac{\partial^2 g}{\partial z_k\partial z_j}=\frac{\partial^2 g}{\partial z_j\partial z_k}$ for every $j,k=1,\dots n$, which means that $\frac{\partial F_j}{\partial z_k}(z)=\frac{\partial F_k}{\partial z_j}(z)$ for all $z\in U$.
  
\end{proof}

Let $B$ be the open unit ball of $\C^n$ with any norm. Define
\[
\mathcal H^\infty_s(B,\C^n)=\{F=(F_1,\dots , F_n)\in\mathcal H^\infty(B,\C^n)\,:\, \frac{\partial F_j}{\partial z_k}(z)=\frac{\partial F_k}{\partial z_j}(z),\, \forall z\in B, \forall j,k=1,\dots n\},
\] and endow $\mathcal H^\infty_s(B,\C^n)$ with the norm inherited by $\mathcal H^\infty(B,\C^n)$.

\begin{corollary}
    The mapping $\Phi:\mathcal HL_0(B)\to \mathcal H^\infty_s(B,\C^n)$ given by $\Phi(g)=dg$ is an isometric isomorphism.
\end{corollary}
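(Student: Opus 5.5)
We need three things: $\Phi$ maps into $\mathcal H^\infty_s(B,\C^n)$, it is isometric, and it is onto. Throughout we identify $(\C^n)^*$ with $\C^n$ via $dg(z)=\big(\frac{\partial g}{\partial z_1}(z),\dots,\frac{\partial g}{\partial z_n}(z)\big)$. The norm on $\C^n$ in $\mathcal H^\infty(B,\C^n)$ is taken to be the dual norm of the given norm, as in the general setting $\mathcal H^\infty(B_X,X^*)$.

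First, let $g\in\mathcal HL_0(B)$. Then $g$ is holomorphic on $B$, so $dg$ is holomorphic from $B$ to $\C^n$. By \cite[Proposition 2.1]{ADGM}, $\|dg\|=\sup_{z\in B}\|dg(z)\|=L(g)<\infty$. Hence $dg\in\mathcal H^\infty(B,\C^n)$ and $\Phi$ is isometric. Since $B$ is convex, it is star-shaped with respect to $a=0$. The converse direction of Theorem \ref{Poincare} (symmetry of second derivatives) then gives $\frac{\partial F_j}{\partial z_k}=\frac{\partial F_k}{\partial z_j}$ for $F=dg$. So $\Phi$ takes values in $\mathcal H^\infty_s(B,\C^n)$. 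Linearity of $\Phi$ is clear, and injectivity follows from isometry. (Alternatively, $dg=0$ and $g(0)=0$ force $g=0$, since $B$ is connected.)

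For surjectivity, take $F\in\mathcal H^\infty_s(B,\C^n)$. Apply Theorem \ref{Poincare} with $U=B$ and $a=0$ to obtain a holomorphic $g\colon B\to\C$ with $dg=F$ and $g(0)=0$. It remains to check that $g$ is bounded and Lipschitz, which is the only point needing care. For $x,y\in B$, the segment $[x,y]$ lies in $B$ by convexity. The function $t\mapsto g(y+t(x-y))$ is $C^1$ on $[0,1]$, so
\[
|g(x)-g(y)|=\Big|\int_0^1 dg\big(y+t(x-y)\big)(x-y)\,dt\Big|\le \|F\|\,\|x-y\|.
\]
Thus $g$ is Lipschitz with $L(g)\le\|F\|$. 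Since $g(0)=0$, we also get $|g(z)|\le\|F\|$ on $B$, so $g$ is bounded. Hence $g\in\mathcal HL_0(B)$ and $\Phi(g)=F$.

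Combining these steps, $\Phi$ is a surjective linear isometry, that is, an isometric isomorphism. The main obstacle is the boundedness and Lipschitz estimate for the primitive. It is handled by the convexity of the ball together with the mean value inequality, which in effect reproves the easy half of \cite[Proposition 2.1]{ADGM} for $g$.
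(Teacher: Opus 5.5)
Your proof is correct and follows the paper's argument. Isometry and well-definedness come from \cite[Proposition 2.1]{ADGM} and the converse half of Theorem \ref{Poincare}, and surjectivity comes from Theorem \ref{Poincare} with $U=B$, $a=0$. The one difference is that you explicitly check, via convexity of $B$ and the integral mean value inequality, that the primitive $g$ is bounded and Lipschitz; the paper compresses this into the remark that $dg=F$ is bounded on $B$.
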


\begin{proof}
     For $g\in\mathcal HL_0(B)$ we know that $dg\in\mathcal H^\infty(B,\C^n)$ with $L(g)=\|dg\|$. Also, by  Theorem \ref{Poincare}, $dg\in\mathcal H^\infty_s(B,\C^n)$. Thus, $\Phi$ is well defined and isometric.

    On the other hand, given $F=(F_1,\dots , F_n)\in\mathcal H^\infty_s(B,\C^n)$, by Theorem \ref{Poincare} for $U=B$ and $a=0$, there exists $g:B\to\C$ holomorphic such that $dg=F$ and $g(0)=0$. Since $dg=F$ is bounded in $B$, $g$ belongs to $\mathcal HL_0(B)$ and $\Phi(g)=F$. Thus, $\Phi$ is surjective.
\end{proof}

The previous result tell us when a function $F=(F_1,\dots , F_n)\in\mathcal H^\infty(B,\C^n)$ is the differential of an element of $\mathcal HL_0(B)$. Even if the condition has to be with the relationship between the $F_j$'s it also has consequences in the behavior of each of them. In other words, we cannot chose an arbitrary function $F_1\in\mathcal H^\infty(B)$ and intend to find $F\in \mathcal H^\infty_s(B,\C^n)$ with the said function as the first coordinate, as we show in the next example.

\begin{example}
    Let $B$ be the bidisk (i.e. the open unit ball  in $\C^2$ with the $\sup$ norm). Choose functions $g,h\in\mathcal H^\infty(\D)$ such that $g'$ is unbounded on $\D$ and define $F_1:B\to\C$ by $F_1(z_1,z_2)= h(z_1) + g(z_2)$. Then, $F_1\in\mathcal H^\infty(B)$ but there is no $F_2\in\mathcal H^\infty(B)$ satisfying $(F_1,F_2)\in \mathcal H^\infty_s(B,\C^2)$.

   It is clear that  $F_1\in\mathcal H^\infty(B)$ and $\frac{\partial F_1}{\partial z_2}(z_1,z_2)=g'(z_2)$. Then, a holomorphic function $F_2:B\to\C$ satisfying $\frac{\partial F_2}{\partial z_1}(z_1,z_2)= \frac{\partial F_1}{\partial z_2}(z_1,z_2)=g'(z_2)$ should have the form $F_2(z_1,z_2)=z_1 g'(z_2) + \ell(z_2)$, where $\ell$ is a holomorphic mapping in $\D$. Let us see that $F_2$ does not belong to $\mathcal H^\infty(B)$. Suppose that there is $M>0$ such that $|F_2(z_1,z_2)|\le M$ for all $(z_1,z_2)
   \in B$. Then, first replacing $z_1=\frac12$ and then letting $z_1\to 1$ we obtain for all $z_2\in \D$,
   \[
   \Big|\frac12 g'(z_2) +\ell(z_2)\Big|\le M \quad\text{ and } \quad \big| g'(z_2) +\ell(z_2)\big|\le M.
   \]
   Since $\big| g'(z_2) +\ell(z_2)\big|\ge \frac12 |g'(z_2)| - \Big|\frac12 g'(z_2) +\ell(z_2)\Big|$ this implies $|g'(z_2)|\le 4 M$ for all $z_2\in \D$, contradicting the hypothesis.
\end{example}

\section{A general description}

With the finite dimensional idea in mind we now present the description for general Banach spaces. Recall that for a continuous $m$-homogeneous polynomial $P\in\mathcal P(^mX)$ its differential $dP\in\mathcal P(^{m-1}X,X^*)$ is given by
\[
dP(u)(x)= m \widecheck{P}(u,\ldots,u, x)\quad \forall u,x\in X.
\]
For each $Q\in\mathcal P(^{m}X,X^*)$ and $x\in X$, we denote by $x\circ Q\in\mathcal P(^mX)$ the polynomial $u\mapsto Q(u)(x)$.

\begin{remark}\label{rmk:dPsim} Let $P\in\mathcal P(^mX)$.
    \begin{enumerate}[(a)]
        \item The function $\widecheck{dP}\in\mathcal L_s(^{m-1}X,X^*)$ is the unique continuous symmetric $(m-1)$-linear mapping satisfying $\widecheck{dP}(u,\dots, u)=dP(u)$. This forces $\widecheck{dP}$ to be given by
        \[
        \widecheck{dP}(u_1,\dots,u_{m-1})(x)=m \widecheck{P}(u_1,\ldots,u_{m-1}, x)\quad \forall u_1,\dots, u_{m-1},x\in X.
        \] (See, for instance, \cite[Example 13.4]{MuLibro}).
        \item Arguing similarly we get that for each $x\in X$, the mapping $\widecheck{x\circ dP}\in\mathcal L_s(^{m-1}X)$ satisfies the following: for $u_1,\dots, u_{m-1}\in X$,
        \[
        \widecheck{x\circ dP}(u_1,\ldots,u_{m-1})= m \widecheck{P}(u_1,\ldots,u_{m-1}, x)=x\circ\widecheck{dP}(u_1,\dots,u_{m-1}).
        \]
    \end{enumerate}
\end{remark}

Now we are ready to identify which homogeneous polynomials from $X$ to $X^*$ are the differential of a scalar-valued polynomial.

\begin{proposition}\label{prop:dif-pol}
    Let $m\ge 2$ and $Q\in\mathcal P(^{m-1}X,X^*)$. Then $Q=dP$ for certain $P\in\mathcal P(^mX)$ if and only if $y\circ d(x\circ Q)= x\circ d(y\circ Q)$, for all $x,y\in X$.
\end{proposition}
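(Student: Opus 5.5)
We prove both implications by translating the condition into a symmetry property of the associated $(m-1)$-linear map $\widecheck{Q}$, using Remark \ref{rmk:dPsim}. Throughout, for $x\in X$ we have $x\circ Q\in\mathcal P(^{m-1}X)$, and by Remark \ref{rmk:dPsim}(a) its differential is
\[
d(x\circ Q)(u)(y)=(m-1)\,\widecheck{x\circ Q}(u,\dots,u,y)=(m-1)\,\widecheck{Q}(u,\dots,u,y)(x),
\]
where the last equality is the analogue of Remark \ref{rmk:dPsim}(b): $\widecheck{x\circ Q}=x\circ\widecheck{Q}$, by uniqueness of the symmetric multilinear form. Thus the condition $y\circ d(x\circ Q)=x\circ d(y\circ Q)$ says exactly that
\[
\widecheck{Q}(u,\dots,u,y)(x)=\widecheck{Q}(u,\dots,u,x)(y)\qquad\forall u,x,y\in X. \tag{$\ast$}
\]

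\emph{Necessity.} If $Q=dP$, Remark \ref{rmk:dPsim}(a) gives $\widecheck{Q}(u_1,\dots,u_{m-1})(x)=m\widecheck{P}(u_1,\dots,u_{m-1},x)$. Hence both sides of $(\ast)$ equal $m\widecheck{P}(u,\dots,u,x,y)$, by the symmetry of $\widecheck{P}$.

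\emph{Sufficiency.} This is the substantive direction. Define the $m$-linear map
\[
A(u_1,\dots,u_m)=\widecheck{Q}(u_1,\dots,u_{m-1})(u_m).
\]
It is continuous and symmetric in its first $m-1$ variables. We claim $(\ast)$ makes $A$ fully symmetric. By $(\ast)$, the map $(x,y)\mapsto A(u,\dots,u,y,x)$ is symmetric in the pair consisting of the $(m-1)$-th and $m$-th slots, for each fixed $u$. Polarizing in $u$ and using the symmetry of $\widecheck{Q}$ in its first $m-1$ entries, we get
\[
A(u_1,\dots,u_{m-2},y,x)=A(u_1,\dots,u_{m-2},x,y)
\]
for all arguments. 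So $A$ is invariant under the transposition $(m-1\ m)$ and under all permutations of $\{1,\dots,m-1\}$; these generate $S_m$, so $A$ is symmetric.

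For the polarization step: the map $(u_1,\dots,u_{m-2})\mapsto A(u_1,\dots,u_{m-2},y,x)-A(u_1,\dots,u_{m-2},x,y)$ is symmetric $(m-2)$-linear and vanishes on the diagonal. By the polarization formula it is zero. When $m=2$ there is nothing to polarize, and $(\ast)$ is the claim directly.

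Now set $P(u)=\frac1m A(u,\dots,u)\in\mathcal P(^mX)$, so $\widecheck{P}=\frac1m A$. Then
\[
dP(u)(x)=m\widecheck{P}(u,\dots,u,x)=A(u,\dots,u,x)=\widecheck{Q}(u,\dots,u)(x)=Q(u)(x),
\]
so $Q=dP$.

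The main obstacle is the bookkeeping identity $\widecheck{x\circ Q}=x\circ\widecheck{Q}$ together with the passage from the diagonal condition $(\ast)$ to full symmetry. Both reduce to uniqueness of symmetric multilinear forms and polarization.
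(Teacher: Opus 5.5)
Your proof is correct and follows essentially the same route as the paper: you reduce the hypothesis to $\widecheck{Q}(u,\dots,u,y)(x)=\widecheck{Q}(u,\dots,u,x)(y)$, define the same $m$-linear map $A(u_1,\dots,u_m)=\widecheck{Q}(u_1,\dots,u_{m-1})(u_m)$, polarize in the first $m-2$ variables to get full symmetry, and take $P$ to be the polynomial of $\frac1m A$. The only difference is cosmetic: you route the necessity direction through $(\ast)$, whereas the paper computes $y\circ d(x\circ dP)$ directly via Remark \ref{rmk:dPsim}(b).
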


\begin{proof}
  $(\Rightarrow)$ Let us see that for any $x,y\in X$ and $P\in\mathcal P(^mX)$ we have that $y\circ d(x\circ dP)= x\circ d(y\circ dP)$. Note that both $y\circ d(x\circ dP)$ and $x\circ d(y\circ dP)$ belong to $\mathcal P(^{m-2}X)$. Now, for any $u\in X$, by Remark \ref{rmk:dPsim} (b) and the symmetry of $\widecheck{P}$ we have
 \begin{align*}
    y\circ d(x\circ dP)(u) &=  d(x\circ dP)(u)(y) = (m-1) \widecheck{x\circ dP}(u,\dots, u, y)\\
    &=(m-1) m \widecheck{P} (u,\dots, u, y, x) = (m-1) m \widecheck{P} (u,\dots, u, x, y)\\
    &= (m-1) \widecheck{y\circ dP}(u,\dots, u, x)=x\circ d(y\circ dP)(u).
 \end{align*}
 
$(\Leftarrow)$ Suppose that $Q\in\mathcal P(^{m-1}X,X^*)$ satisfies $y\circ d(x\circ Q)= x\circ d(y\circ Q)$, for all $x,y\in X$ and define $A\in\mathcal L(^mX)$ by
\[
A(u_1,\dots,u_m)= \widecheck Q(u_1,\dots,u_{m-1})(u_m)\quad \forall u_1,\dots, u_{m}\in X.
\] Let us see that $A$ is symmetric. By the symmetry of $\widecheck{Q}$ it is enough to see that we can change $u_{m-1}\leftrightarrow u_m$ in the definition of $A$; i.e. that $\widecheck Q(u_1,\dots,u_{m-1})(u_m)=\widecheck Q(u_1,\dots,u_{m-2},u_m)(u_{m-1})$.

 For $x,y,u\in X$, we know that $y\circ d(x\circ Q)(u)=d(x\circ Q)(u)(y)=(m-1) \widecheck Q(u,\dots,u, y)(x)$.
Thus, by the hypothesis we obtain that
\[
\widecheck Q(u,\dots,u, y)(x)=\widecheck Q(u,\dots,u, x)(y) \quad\forall x,y,u\in X.
\] Now, using the polarization formula in the first $(m-2)$ variables this implies that
\[
\widecheck Q(u_1,\dots,u_{m-2}, y)(x)=\widecheck Q(u_1,\dots,u_{m-2}, x)(y) \quad\forall x,y,u_1,\dots,u_{m-2}\in X
\] yielding the symmetry of $A$. Finally, consider $P\in\mathcal P(^mX)$, the polynomial associated to the symmetric $m$-linear mapping $\frac{1}{m}A$ and note that for all $u,x\in X$,
\[
dP(u)(x) = m \widecheck{P}(u,\dots, u,x) = A (u,\dots, u,x) =\widecheck{Q}(u,\dots, u)(x) = Q(u)(x).
\]
 
\end{proof}

The following example shows which \textit{monomials} are differentials of homogeneous polynomials.

\begin{example}
The polynomial $Q\in\mathcal P(^{m-1}X,X^*)$ given by $Q(x)=x^*(x)^{m-1} y^*$ (where $x^*$ and $ y^*$ are non null elements in $X^*$) satisfies $Q=dP$ for certain $P\in\mathcal P(^mX)$ if and only if $x^*=\alpha y^*$ for any $\alpha\in \C$.
Indeed, this follows from the previous proposition since for all $x,y\in X$,
\[
y\circ d(x\circ Q)= (m-1) x^*(y)y^*(x) x^*(\cdot)^{m-2} \quad\text{ and } \quad x\circ d(y\circ Q)= (m-1) x^*(x)y^*(y) x^*(\cdot)^{m-2}. 
\]
\end{example}

The polynomial result allows us to describe the subspace of $\Hi(B_X,X^*)$ formed by the differentials of the elements in $\mathcal HL_0(B_X)$. As in the finite dimensional setting we define
\[
\mathcal H^\infty_s(B_X,X^*)=\{g\in \mathcal H^\infty(B_X,X^*)\,:\, y\circ d(x\circ g)= x\circ d(y\circ g),\, \forall x,y\in X\}
\] and endow it with the norm inherited by $\mathcal H^\infty(B_X,X^*)$.

\begin{theorem}\label{thm:dif-HL0}
 The mapping $\Phi:\mathcal HL_0(B_X)\to \mathcal H^\infty_s(B_X,X^*)$ given by $\Phi(f)=df$ is an isometric isomorphism.   
\end{theorem}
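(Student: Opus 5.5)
Since $\Phi$ is already known from \cite{ADGM} (via $L(f)=\|df\|$) to be a well-defined linear isometry from $\mathcal HL_0(B_X)$ into $\Hi(B_X,X^*)$, two things remain. First, the image of $\Phi$ lies in $\mathcal H^\infty_s(B_X,X^*)$. Second, every $g\in\mathcal H^\infty_s(B_X,X^*)$ equals $df$ for some $f\in\mathcal HL_0(B_X)$. Both will be reduced to Proposition \ref{prop:dif-pol} by expanding in Taylor series at $0$. On the ball this expansion converges at every point of $B_X$, and in fact uniformly on $rB_X$ for $r<1$ by the Cauchy inequalities, because the functions are bounded.

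For the first part, take $f\in\mathcal HL_0(B_X)$ and write $f=\sum_{m\ge1}P_m$ with $P_m=P_mf(0)\in\mathcal P(^mX)$. Then $df=\sum_m dP_m$, and $dP_m\in\mathcal P(^{m-1}X,X^*)$ is the $(m-1)$-homogeneous part of $df$. For fixed $x,y\in X$, the maps $g\mapsto x\circ g$ and $h\mapsto d h$, followed by evaluation at $y$, are continuous linear operations that commute with the termwise expansion. Hence the $(m-2)$-homogeneous Taylor coefficient of $y\circ d(x\circ df)$ is $y\circ d(x\circ dP_m)$. The direct implication of Proposition \ref{prop:dif-pol} makes this symmetric in $x,y$ for $m\ge2$; for $m=1$ the term $dP_1$ is constant and contributes nothing. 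By uniqueness of Taylor coefficients, $y\circ d(x\circ df)=x\circ d(y\circ df)$. Alternatively, this is just the symmetry of the second Fréchet derivative $d^2f(u)(x,y)$.

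For the surjectivity, take $g\in\mathcal H^\infty_s(B_X,X^*)$ and expand $g=\sum_{m\ge1}Q_{m-1}$ with $Q_{m-1}\in\mathcal P(^{m-1}X,X^*)$. The argument above shows that the symmetry condition passes to each homogeneous component, so each $Q_{m-1}$ satisfies $y\circ d(x\circ Q_{m-1})=x\circ d(y\circ Q_{m-1})$. Proposition \ref{prop:dif-pol} then gives $P_m\in\mathcal P(^mX)$ with $dP_m=Q_{m-1}$ for $m\ge2$, and $P_1:=Q_0\in X^*$ handles the constant term. I will set $f=\sum_{m\ge1}P_m$.

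The main obstacle is convergence of this series on all of $B_X$. I would handle it with the explicit formula $P_m(u)=\frac1m Q_{m-1}(u)(u)$, which follows from the construction in the proof of Proposition \ref{prop:dif-pol} since $P_m(u)=\frac1m A(u,\dots,u)$. The Cauchy estimates give $\|Q_{m-1}\|\le\|g\|$, so $|P_m(u)|\le \frac1m\|g\|\,\|u\|^m$. This yields uniform convergence on $rB_X$, so $f$ is holomorphic on $B_X$ with $f(0)=0$. Its differential can be computed termwise, giving $df=\sum_m dP_m=g$. Since $df=g$ is bounded, $f$ is Lipschitz on the convex set $B_X$ by the mean value inequality, with $L(f)=\|g\|$, and $f$ is bounded by $\|g\|$. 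Thus $f\in\mathcal HL_0(B_X)$ and $\Phi(f)=g$, which completes the proof.
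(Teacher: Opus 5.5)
Your proposal is correct and follows the paper's proof essentially step by step: Taylor expansion at $0$, uniqueness of coefficients to pass the symmetry condition to each homogeneous part, Proposition \ref{prop:dif-pol} to integrate each $Q_{m-1}$, and a coefficient bound giving convergence on $B_X$. Your estimate $|P_m(u)|=\frac1m|Q_{m-1}(u)(u)|\le\frac1m\|g\|\,\|u\|^m$ and your mean value argument for the Lipschitz property only spell out what the paper states briefly, namely its bound $\|P_m\|\le\|Q_m\|$ and its concluding ``therefore $f\in\mathcal HL_0(B_X)$''.
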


\begin{proof}
In \cite[Proposition 2.1]{ADGM} it is proved  that $\Phi$ is isometric. So we only have to prove the surjectivity. 
    Suppose that $f\in \mathcal HL_0(B_X)$ has Taylor series at 0 given by $f=\sum_{m=1}^\infty P_{m}$, with $P_m\in\mathcal P(^mX)$ for each $m\ge 1$. Then, $df=\sum_{m=1}^\infty dP_{m}$ and thus $x\circ df=\sum_{m=1}^\infty x\circ dP_{m}$, where $x\circ dP_m\in\mathcal P(^{m-1}X)$. Note that all the previous series have radius of uniform convergence bigger than or equal to 1, since the functions are holomorphic and bounded on $B_X$. Hence, $d(x\circ df)=\sum_{m=2}^\infty d(x\circ dP_{m})$. Now this yields that $df\in \mathcal H^\infty_s(B_X,X^*)$ due to  Proposition \ref{prop:dif-pol}:
    \[
    y\circ d(x\circ df)=\sum_{m=2}^\infty y\circ d(x\circ dP_{m})=\sum_{m=2}^\infty x\circ d(y\circ dP_{m})=x\circ d(y\circ df).
    \]

    Conversely, let $g\in\mathcal H^\infty_s(B_X,X^*)$. We want to show that there is $f\in\mathcal HL_0(B_X)$ such that $g=df$. We write the Taylor series of $g$ at 0 in the following way: $g=\sum_{m=1}^\infty Q_{m}$, where $Q_m\in\mathcal P(^{m-1}X,X^*)$ for each $m$. Then, for any $x,y\in X$, the function $y\circ d(x\circ g)=x\circ d(y\circ g)$ is holomorphic in $B_X$ and its Taylor series at 0 is given by
    \[
    y\circ d(x\circ g)=\sum_{m=2}^\infty y\circ d(x\circ Q_{m})\quad\text{ and }\quad x\circ d(y\circ g)=\sum_{m=2}^\infty x\circ d(y\circ Q_{m}).
    \]
    By uniqueness of the series expansion at 0 we obtain that
    \[
    y\circ d(x\circ Q_{m})=x\circ d(y\circ Q_{m})\quad\forall x,y\in X, m\ge 2.
    \] Due to Proposition \ref{prop:dif-pol} for  each $m\ge 2$ there is a polynomial $P_m\in\mathcal P(^mX)$ such that $Q_m=dP_m$. For $m=1$ observe that $Q_1$ is a constant function into $X^*$. Thus, define $P_1\in X^*=\mathcal P(^1X)$ to be this constant value and note that $dP_1=Q_1$. Since $\|P_m\|\le \|Q_m\|$ for all $m$, the radius of convergence of the series $\sum_{m=1}^\infty P_m$ is at least 1. As a consequence,  it defines a holomorphic function on $B_X$ that we call $f$. This means that $f(x)=\sum_{m=1}^\infty P_m(x)$ for all $x\in B_X$ and so
    \[
    f(0)=0\quad\text{ and }\quad df=\sum_{m=1}^\infty dP_m = \sum_{m=1}^\infty Q_m =g \in \mathcal H^\infty_s(B_X,X^*).
    \] Therefore, $f$ belongs to $\mathcal HL_0(B_X)$.
    
\end{proof}

\begin{remark}
    Note that for each $m\ge 2$ and $Q\in\mathcal P(^{m-1}X,X^*)$ we can define a bilinear mapping 
    \begin{align*}
       B_Q:X\times X &\to\mathcal P(^{m-2}X)\\
       (x,y) &\mapsto y\circ d(x\circ Q)
    \end{align*} which satisfies
    \[
    \|y\circ d(x\circ Q)\|= \sup_{u\in B_X}(m-1) |\widecheck Q(u,\dots,u, y)(x)| \le (m-1) e \|Q\| \|x\| \|y\|.
    \]
    Hence $B_Q$ is a continuous bilinear mapping with $\|B_Q\|\le (m-1) e \|Q\| $.

    We can thus rephrase the statement of Proposition \ref{prop:dif-pol} by saying that $Q=dP$ for a certain $P$ if and only if $B_Q$ is  symmetric.

    For a bilinear mapping to be symmetric it is enough to check it on the product of the spheres $S_X\times S_X$. Therefore,  in  Proposition \ref{prop:dif-pol} the condition is equivalently stated as $y\circ d(x\circ Q)=x\circ d(y\circ Q)$ for every $x,y\in S_X$. Consequently in Theorem \ref{thm:dif-HL0} the space $\mathcal H^\infty_s(B_X,X^*)$ is the set of $g\in \mathcal H^\infty(B_X,X^*)$ satisfying $y\circ d(x\circ g)=x\circ d(y\circ g)$ for every  $x,y\in S_X$.
    
\end{remark}

    One more comment is in order. Suppose that $X$ has Schauder basis $\{e_n\}_n$. Then, a continuous bilinear mapping $A$ on $X$ is symmetric if and only if $A(e_n,e_k)=A(e_k,e_n)$ for al $k,n\in\N$. Hence, in this case we can shorten the condition in Proposition \ref{prop:dif-pol} in this way: $e_k\circ d(e_n\circ Q)= e_n\circ d(e_k\circ Q)$, for all $n,k\in\N$.

\begin{corollary}\label{characterizacionwithSchauderbasis}
    Let $X$ be a complex Banach space with Schauder basis $\{e_n\}_n$. Then, $\Phi:\mathcal HL_0(B_X)\to \mathcal H^\infty_s(B_X,X^*)$ given by $\Phi(f)=df$ is an isometric isomorphism, where
   \[
\mathcal H^\infty_s(B_X,X^*)=\{g\in \mathcal H^\infty(B_X,X^*)\,:\, e_k\circ d(e_n\circ g)= e_n\circ d(e_k\circ g),\, \forall n,k\in \N\}.
\] 
\end{corollary}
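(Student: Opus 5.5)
By Theorem \ref{thm:dif-HL0}, $\Phi$ is an isometric isomorphism onto the space
\[
\{g\in\mathcal H^\infty(B_X,X^*)\,:\, y\circ d(x\circ g)= x\circ d(y\circ g),\ \forall x,y\in X\}.
\]
So it suffices to show that this space coincides with the one defined in the statement using only the basis vectors. One inclusion is trivial, by taking $x=e_n$ and $y=e_k$. For the reverse inclusion, let $g\in\mathcal H^\infty(B_X,X^*)$ satisfy $e_k\circ d(e_n\circ g)= e_n\circ d(e_k\circ g)$ for all $n,k$. We must show that $y\circ d(x\circ g)=x\circ d(y\circ g)$ for arbitrary $x,y\in X$.

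We follow the proof of Theorem \ref{thm:dif-HL0} and write the Taylor expansion $g=\sum_{m\ge1}Q_m$ with $Q_m\in\mathcal P(^{m-1}X,X^*)$. For fixed $n,k$ the function $e_k\circ d(e_n\circ g)$ is holomorphic on $B_X$, and its Taylor series at $0$ is $\sum_{m\ge2} e_k\circ d(e_n\circ Q_m)$. Uniqueness of the Taylor expansion then gives
\[
e_k\circ d(e_n\circ Q_m)=e_n\circ d(e_k\circ Q_m)\quad \text{for every } m\ge 2 \text{ and all } n,k.
\]
We now use the bilinear map $B_{Q_m}(x,y)=y\circ d(x\circ Q_m)$ from the preceding Remark. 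It is continuous, with $\|B_{Q_m}\|\le (m-1)e\|Q_m\|$, and we have just shown $B_{Q_m}(e_n,e_k)=B_{Q_m}(e_k,e_n)$ for all $n,k$.

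The key step is the comment following that Remark: a continuous bilinear map symmetric on pairs of basis vectors is symmetric. We justify this briefly. Bilinearity gives symmetry on pairs from $\mathrm{span}\{e_n\}$. For arbitrary $x,y$, the partial sums $S_N x\to x$ and $S_N y\to y$ converge in norm because $\{e_n\}$ is a Schauder basis. Continuity of $B_{Q_m}$, now valued in the Banach space $\mathcal P(^{m-2}X)$, then lets us pass to the limit in $B_{Q_m}(S_Nx,S_Ny)=B_{Q_m}(S_Ny,S_Nx)$. This is the only point with any content, and the Remark's estimate on $\|B_{Q_m}\|$ handles it.

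Hence $y\circ d(x\circ Q_m)=x\circ d(y\circ Q_m)$ for all $x,y\in X$ and all $m\ge2$. Summing the Taylor series, as in the first half of the proof of Theorem \ref{thm:dif-HL0}, gives $y\circ d(x\circ g)=x\circ d(y\circ g)$. Alternatively, Proposition \ref{prop:dif-pol} gives $Q_m=dP_m$ for each $m$, and we can rerun the construction of $f=\sum_m P_m$ exactly as in the proof of Theorem \ref{thm:dif-HL0}. Either way the two spaces coincide, and Theorem \ref{thm:dif-HL0} yields the corollary.
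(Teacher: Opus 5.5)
Your proposal is correct and is essentially the paper's argument: the corollary comes from Theorem \ref{thm:dif-HL0}, using uniqueness of Taylor coefficients and the fact that each continuous bilinear map $B_{Q_m}$ is symmetric once it is symmetric on pairs of basis vectors. You have only written out the density and continuity justification of that fact, which the paper states without proof in the comment preceding the corollary.
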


Actually, we can rephrase Corollary \ref{characterizacionwithSchauderbasis} in a way that is very close to the characterization given in Theorem \ref{Poincare} for the finite dimensional case. 

 Let $X$ be a complex Banach space with Schauder basis $\{e_n\}_n$, consider $\{e_n^*\}_n\subset X^*$ 
  such that $\{e_n,e_n^*\}_n$ is a biorthogonal system.    If  $g:B_X\to X^*$ is a (bounded) holomorphic mapping, then $g(x)=\sum_{n=1}^\infty g_n(x)e_n^\ast$ for every $x\in B_X$, where every $g_n:B_X\to \mathbb{C}$ is a (bounded)  holomorphic mapping,  and for each $x\in B_X$,  the series 
 $\sum_{n=1}^\infty g_n(x)e_n^\ast$ converges to $g(x)$
  with respect to $w(X^*,X)$, the weak-star topology on $X^*$ (see  \cite[Fact 4.11, p. 187]{FHHMZ} and \cite[Corollary 15.47, p. 391]{DGMS}).

\begin{corollary}\label{characterizacionwithSchauderbasisII}
    Let $X$ be a complex Banach space with Schauder basis $\{e_n\}_n$  and $g=\sum_{n=1}^\infty g_n e_n^\ast \in \mathcal H^\infty(B_X,X^*)$, then $g=df$ with $f \in \mathcal HL_0(B_X)$ if and only if  
    \[
    \frac{\partial g_n}{\partial x_k}(x)=\frac{\partial g_k}{\partial x_n}(x), 
    \]
for every $x\in B_X$ and every $k,n\in \mathbb{N}$.
\end{corollary}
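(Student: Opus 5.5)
The plan is to reduce the statement to Corollary \ref{characterizacionwithSchauderbasis}. That corollary says $g=df$ for some $f\in\mathcal HL_0(B_X)$ if and only if
\[
e_k\circ d(e_n\circ g)= e_n\circ d(e_k\circ g)\quad \text{for all } n,k\in\N.
\]
So it suffices to identify each side of this identity with a partial derivative of a coordinate function of $g$. Here $\frac{\partial g_n}{\partial x_k}(x)$ is read as the directional derivative $\lim_{\lambda\to 0}\frac{g_n(x+\lambda e_k)-g_n(x)}{\lambda}=dg_n(x)(e_k)$.

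The key step is the computation of $e_n\circ g$. For $x\in B_X$ we have $g(x)=\sum_j g_j(x)e_j^*$ with $w(X^*,X)$-convergence. Evaluating at $e_n$ and using biorthogonality $e_j^*(e_n)=\delta_{jn}$ gives
\[
(e_n\circ g)(x)=g(x)(e_n)=\sum_j g_j(x)e_j^*(e_n)=g_n(x).
\]
Hence $e_n\circ g=g_n$ as holomorphic functions on $B_X$. This also confirms that each $g_n$ is holomorphic, being the composition of $g$ with the continuous linear functional ``evaluation at $e_n$'' on $X^*$.

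Next I would differentiate. For each $x\in B_X$,
\[
\bigl(e_k\circ d(e_n\circ g)\bigr)(x)=d(g_n)(x)(e_k)=\frac{\partial g_n}{\partial x_k}(x).
\]
By the same reasoning, $\bigl(e_n\circ d(e_k\circ g)\bigr)(x)=\frac{\partial g_k}{\partial x_n}(x)$. The condition in Corollary \ref{characterizacionwithSchauderbasis} is an equality of functions on $B_X$, so it holds exactly when the pointwise equalities $\frac{\partial g_n}{\partial x_k}(x)=\frac{\partial g_k}{\partial x_n}(x)$ hold for all $x\in B_X$ and all $n,k$. Both directions of the statement then follow at once from Corollary \ref{characterizacionwithSchauderbasis}.

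The only delicate point is the identification $e_n\circ g=g_n$, since the expansion $g=\sum g_n e_n^*$ converges only weak-star. Weak-star convergence, however, is precisely pointwise convergence on elements of $X$, so evaluation at $e_n$ may be taken term by term. Alternatively, one can simply define $g_n(x):=g(x)(e_n)$, which is the standard choice of coordinates in the cited facts. Once this is settled, the rest is bookkeeping of notation.
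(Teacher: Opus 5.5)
Your proposal is correct and follows the paper's own proof: reduce to Corollary \ref{characterizacionwithSchauderbasis} by identifying $e_n\circ g=g_n$ and $e_k\circ d g_n=\frac{\partial g_n}{\partial x_k}$. Your added justification that $e_n\circ g=g_n$, via weak-star convergence and biorthogonality, is a welcome detail that the paper states without comment.
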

\begin{proof}
    It is enough to observe that $e_n\circ g=g_n$ and $e_k\circ d g_n= \frac{\partial g_n}{\partial x_k}$ for every $k,n\in \mathbb{N}$. Hence, 
    \[
    e_k\circ d(e_n\circ g)=
e_k\circ d g_n= \frac{\partial g_n}{\partial x_k},  
\]
for every $k,n\in \mathbb{N}$. Now, the conclusion follows from the above corollary.
\end{proof}

\subsection{Vector-valued case}
The previous development can be  extended to  the vector valued case, with almost the same proofs. 

As in the scalar-valued case, our goal is to describe, for any Banach spaces $X$ and $Y$, the image of the mapping
\begin{align*}
	\Phi\colon \mathcal HL_0(B_X,Y) & \to \Hinf (B_X,\mathcal L(X,Y))\\
	f & \mapsto df.
	\end{align*}

For $X=\mathbb C$ the mapping translates to 
\begin{align*}
	\Phi\colon \mathcal HL_0(\D,Y) & \to \Hinf (\D,Y)\\
	f & \mapsto f',
	\end{align*}
 which is an isometric isomorphism (the scalar-valued argument from \cite{ADGM} works changing the integral to a Bochner's integral). 

For a general Banach space $X$ we proceed as before. Given any $m$-homogeneous polynomial
$Q\in\mathcal P(^{m}X,\mathcal L(X,Y))$ and any vector $x\in X$, we denote by $x\circ Q\in\mathcal P(^mX,Y)$ the polynomial $u\mapsto Q(u)(x)$. Arguing as in Proposition \ref{prop:dif-pol} we obtain:

\begin{proposition}
    Let $m\ge 2$ and $Q\in\mathcal P(^{m-1}X,\mathcal L(X,Y))$. Then $Q=dP$ for certain $P\in\mathcal P(^mX,Y)$ if and only if $y\circ d(x\circ Q)= x\circ d(y\circ Q)$, for all $x,y\in X$.
\end{proposition}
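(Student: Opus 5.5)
The plan is to mirror the proof of Proposition \ref{prop:dif-pol}, replacing scalars by vectors in $Y$. First I will record the vector-valued analogue of Remark \ref{rmk:dPsim}. For $P\in\mathcal P(^mX,Y)$ the differential $dP\in\mathcal P(^{m-1}X,\mathcal L(X,Y))$ is given by $dP(u)(x)=m\widecheck P(u,\dots,u,x)$. Its associated symmetric $(m-1)$-linear map is $\widecheck{dP}(u_1,\dots,u_{m-1})(x)=m\widecheck P(u_1,\dots,u_{m-1},x)$, because this map is symmetric and continuous and agrees with $dP$ on the diagonal, and symmetric multilinear maps are unique by polarization, which is valid for $Y$-valued maps. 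Likewise, for each $x\in X$ we get $\widecheck{x\circ dP}(u_1,\dots,u_{m-1})=m\widecheck P(u_1,\dots,u_{m-1},x)$.

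For $(\Rightarrow)$, fix $x,y,u\in X$ and compute as in the scalar case:
\[
y\circ d(x\circ dP)(u)=(m-1)\widecheck{x\circ dP}(u,\dots,u,y)=(m-1)m\widecheck P(u,\dots,u,y,x).
\]
By symmetry of $\widecheck P$ this equals $(m-1)m\widecheck P(u,\dots,u,x,y)=x\circ d(y\circ dP)(u)$.

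For $(\Leftarrow)$, define the $m$-linear map $A\colon X^m\to Y$ by $A(u_1,\dots,u_m)=\widecheck Q(u_1,\dots,u_{m-1})(u_m)$. It is continuous and takes values in $Y$. The hypothesis reads $(m-1)\widecheck Q(u,\dots,u,y)(x)=(m-1)\widecheck Q(u,\dots,u,x)(y)$ for all $u,x,y$. Fix $x,y$. Both sides are $Y$-valued $(m-2)$-homogeneous polynomials in $u$ whose associated symmetric $(m-2)$-linear maps are $(u_1,\dots,u_{m-2})\mapsto\widecheck Q(u_1,\dots,u_{m-2},y)(x)$ and the analogous map with $x,y$ swapped. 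The polarization formula in the first $m-2$ variables therefore gives $\widecheck Q(u_1,\dots,u_{m-2},y)(x)=\widecheck Q(u_1,\dots,u_{m-2},x)(y)$. So $A$ is invariant under the transposition of its last two variables. It is also invariant under all permutations of the first $m-1$ variables, by symmetry of $\widecheck Q$. These permutations together generate the full symmetric group, so $A$ is symmetric. (When $m=2$ there are no $u$'s, and the hypothesis already gives $Q(0)$-type symmetry $\widecheck Q(y)(x)=\widecheck Q(x)(y)$ directly.)

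Finally, let $P\in\mathcal P(^mX,Y)$ be the polynomial associated to $\frac1m A$. Then
\[
dP(u)(x)=m\widecheck P(u,\dots,u,x)=A(u,\dots,u,x)=Q(u)(x),
\]
so $Q=dP$.

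The only step needing care is the polarization argument producing symmetry in the last two slots. It is routine once one notes that polarization holds verbatim for Banach-space-valued multilinear maps. I expect no genuine obstacle: the proof of Proposition \ref{prop:dif-pol} nowhere used that the values were scalar, only linearity and the polarization identity.
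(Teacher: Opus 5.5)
Your proof is correct and follows the same route as the paper, which handles this case by ``arguing as in Proposition~\ref{prop:dif-pol}''. Both use $\widecheck{dP}(u_1,\dots,u_{m-1})(x)=m\widecheck P(u_1,\dots,u_{m-1},x)$ for the forward direction. For the converse, both use the $m$-linear map $A(u_1,\dots,u_m)=\widecheck Q(u_1,\dots,u_{m-1})(u_m)$, polarize in the first $m-2$ variables to get symmetry in the last two slots, and take $P$ to be the polynomial of $\frac1m A$. You also correctly note that only multilinearity and the $Y$-valued polarization identity are used, which is exactly why the scalar argument transfers verbatim.
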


The polynomial result allows us to describe the subspace of $\Hi(B_X,\mathcal L(X,Y))$ formed by the differentials of the elements in $\mathcal HL_0(B_X,Y)$. We define
\[
\mathcal H^\infty_s(B_X,\mathcal L(X,Y))=\{g\in \mathcal H^\infty(B_X,\mathcal L(X,Y))\,:\, y\circ d(x\circ g)= x\circ d(y\circ g),\, \forall x,y\in X\}
\] and endow it with the norm inherited from $\mathcal H^\infty(B_X,\mathcal L(X,Y))$.

Now, with standard modifications in the proofs of Theorem \ref{thm:dif-HL0} and Corollary \ref{characterizacionwithSchauderbasis} we derive the following:
\begin{theorem}
 The mapping $\Phi:\mathcal HL_0(B_X,Y)\to \mathcal H^\infty_s(B_X,\mathcal L(X,Y))$ given by $\Phi(f)=df$ is an isometric isomorphism.   
\end{theorem}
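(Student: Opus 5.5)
The plan follows the proof of Theorem \ref{thm:dif-HL0}, replacing $X^*$ by $\mathcal L(X,Y)$ and scalar-valued polynomials by $Y$-valued ones. There are three steps: the isometry, the inclusion $\Phi(\mathcal HL_0(B_X,Y))\subset \mathcal H^\infty_s(B_X,\mathcal L(X,Y))$, and surjectivity.

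\textbf{Isometry.} For $f\in\mathcal HL_0(B_X,Y)$ we need $L(f)=\sup_{x\in B_X}\|df(x)\|$. The inequality $\|df(x)\|\le L(f)$ is immediate from the definition of the Fréchet derivative. Conversely, for $x,y\in B_X$ the segment $[x,y]$ lies in the convex set $B_X$. The mean value inequality for Banach-valued maps, obtained by composing with norming functionals $y^*\in B_{Y^*}$ and applying the scalar case, gives $\|f(x)-f(y)\|\le \sup_{z\in[x,y]}\|df(z)\|\,\|x-y\|$. Hence $\Phi$ is well defined into $\mathcal H^\infty(B_X,\mathcal L(X,Y))$ and is isometric, exactly as in \cite[Proposition 2.1]{ADGM}. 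In particular, a holomorphic $f$ on $B_X$ with $f(0)=0$ and bounded $df$ automatically lies in $\mathcal HL_0(B_X,Y)$.

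\textbf{Image lies in $\mathcal H^\infty_s$.} Write the Taylor expansion $f=\sum_{m\ge1}P_m$ with $P_m\in\mathcal P(^mX,Y)$. Since $f$ is holomorphic on $B_X$ with bounded derivative, the series for $df$ and for $x\circ df$ converge uniformly on smaller balls and can be differentiated termwise. This gives $y\circ d(x\circ df)=\sum_{m\ge2} y\circ d(x\circ dP_m)$. Each term is symmetric in $x,y$ by the forward direction of the vector-valued Proposition stated just above, which rests on the symmetry of $\widecheck{P}_m$. Hence $df\in\mathcal H^\infty_s(B_X,\mathcal L(X,Y))$.

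\textbf{Surjectivity.} Let $g\in\mathcal H^\infty_s(B_X,\mathcal L(X,Y))$ with Taylor series $g=\sum_{m\ge1}Q_m$, $Q_m\in\mathcal P(^{m-1}X,\mathcal L(X,Y))$.
\begin{enumerate}[(i)]
\item The maps $y\circ d(x\circ g)$ and $x\circ d(y\circ g)$ are equal holomorphic $Y$-valued functions. Uniqueness of the Taylor expansion at $0$, which holds for Banach-valued maps, yields $y\circ d(x\circ Q_m)=x\circ d(y\circ Q_m)$ for each $m\ge2$.
\item The vector-valued Proposition then gives $P_m\in\mathcal P(^mX,Y)$ with $dP_m=Q_m$. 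Set $P_1=Q_1\in\mathcal L(X,Y)$.
\item Concretely, $\widecheck{P}_m=\frac1m A_m$ with $A_m(u_1,\dots,u_m)=\widecheck Q_m(u_1,\dots,u_{m-1})(u_m)$. This gives $P_m(u)=\frac1m Q_m(u)(u)$, so $\|P_m\|\le \frac1m\|Q_m\|\le\|Q_m\|$.
\end{enumerate}
Since $g$ is bounded on $B_X$, the Cauchy inequalities give $\|Q_m\|\le\|g\|$. Hence $\limsup\|P_m\|^{1/m}\le1$, and $f=\sum P_m$ is holomorphic on $B_X$ with $f(0)=0$. Termwise differentiation gives $df=\sum dP_m=g$, which is bounded, so $f\in\mathcal HL_0(B_X,Y)$ by the isometry step.

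\textbf{Main obstacle.} The step needing most care is the vector-valued Proposition, specifically the symmetry of $A_m$. The polarization argument used in Proposition \ref{prop:dif-pol} transfers verbatim to $Y$-valued multilinear maps, since polarization is purely algebraic. Equivalently, one can reduce to the scalar case by composing with each $y^*\in Y^*$ and using that $Y^*$ separates points of $Y$. Everything else, namely Cauchy estimates, uniqueness of Taylor series and termwise differentiation, is standard for Banach-valued holomorphic maps \cite{MuLibro}.
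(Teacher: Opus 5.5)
Your proposal is correct and follows essentially the same route the paper intends, namely the proof of Theorem \ref{thm:dif-HL0} with $X^*$ replaced by $\mathcal L(X,Y)$. The isometry comes from the mean value inequality as in \cite{ADGM}, the inclusion from termwise differentiation plus the forward direction of the vector-valued Proposition, and surjectivity from uniqueness of Taylor coefficients plus the converse direction, with the radius-of-convergence estimate $\|P_m\|\le\frac1m\|Q_m\|\le\|g\|$ (slightly sharper than, but equivalent in effect to, the paper's $\|P_m\|\le\|Q_m\|$).
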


\begin{corollary}
    Let $X$ be a complex Banach space with Schauder basis $\{e_n\}_n$ and let $Y$ be any complex Banach space. Then, $\Phi:\mathcal HL_0(B_X,Y)\to \mathcal H^\infty_s(B_X,\mathcal L(X,Y))$ given by $\Phi(f)=df$ is an isometric isomorphism, where
   \[
\mathcal H^\infty_s(B_X,\mathcal L(X,Y))=\{g\in \mathcal H^\infty(B_X,\mathcal L(X,Y))\,:\, e_k\circ d(e_n\circ g)= e_n\circ d(e_k\circ g),\, \forall n,k\in \N\}.
\] 
\end{corollary}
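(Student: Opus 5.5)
The plan is to reduce the final corollary to the vector-valued theorem just stated, which identifies the image of $\Phi$ with $\mathcal H^\infty_s(B_X,\mathcal L(X,Y))$ defined by the condition over all $x,y\in X$. By that theorem, $\Phi$ is already an isometric isomorphism onto this space. So it suffices to show the following: a function $g\in\mathcal H^\infty(B_X,\mathcal L(X,Y))$ satisfies $y\circ d(x\circ g)=x\circ d(y\circ g)$ for all $x,y\in X$ if and only if it satisfies it for all pairs $x=e_n$, $y=e_k$. One direction is trivial.

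For the nontrivial direction, fix $u\in B_X$ and consider the map
\[
B_u\colon X\times X\to Y,\qquad (x,y)\mapsto y\circ d(x\circ g)(u)=d(x\circ g)(u)(y).
\]
First I check that $B_u$ is bilinear. Linearity in $y$ is clear since $d(x\circ g)(u)\in\mathcal L(X,Y)$. Linearity in $x$ follows from linearity of $x\mapsto x\circ g$ and of the differential.

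Next I check continuity, which is the only real point. Writing the Taylor series $g=\sum_m Q_m$ at $0$ with $Q_m\in\mathcal P(^{m-1}X,\mathcal L(X,Y))$, one gets
\[
d(x\circ g)(u)(y)=\sum_{m\ge2}(m-1)\widecheck Q_m(u,\dots,u,y)(x).
\]
Each term is bounded by $(m-1)e\|Q_m\|\|u\|^{m-2}\|x\|\|y\|$, exactly as in the remark preceding the scalar corollary. Here $\|Q_m\|\le\|g\|_\infty$ by the Cauchy inequalities and $\|u\|<1$, so the series of bounds converges and $\|B_u(x,y)\|\le C_u\|x\|\|y\|$. Alternatively one could use Cauchy estimates for $d(x\circ g)$ on a ball around $u$ directly, giving $\|d(x\circ g)(u)\|\le \|g\|\|x\|/(1-\|u\|)$.

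With $B_u$ a continuous bilinear map, the hypothesis gives $B_u(e_n,e_k)=B_u(e_k,e_n)$ for all $n,k$. Expand $x=\sum_n x_ne_n$ and $y=\sum_k y_ke_k$ via the basis. Separate continuity lets us pass to the limit in each variable in turn:
\[
B_u(x,y)=\lim_N\lim_M\sum_{n\le N,\,k\le M}x_ny_kB_u(e_n,e_k),
\]
and this expression is symmetric in the roles of $x,y$. Hence $B_u(x,y)=B_u(y,x)$ for every $u$, which is the full symmetry condition. This is the comment made before the scalar Schauder-basis corollary, now with values in $Y$.

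The expected obstacle is just the continuity of $B_u$ in the $x$-variable, handled as above. Everything else is routine, so the corollary follows from the vector-valued theorem.
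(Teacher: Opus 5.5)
Your proof is correct, but it applies the key fact (a continuous bilinear map that is symmetric on pairs of basis vectors is symmetric) at a different level than the paper does.

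The paper works coefficientwise, following the comment before Corollary~\ref{characterizacionwithSchauderbasis}, which the vector-valued corollary mirrors "with standard modifications". For each Taylor coefficient $Q_m$ of $g$ it considers the bilinear map $(x,y)\mapsto y\circ d(x\circ Q_m)$, with values in $\mathcal P(^{m-2}X,Y)$. Continuity comes from the estimate $\|B_Q\|\le (m-1)e\|Q\|$ of the Remark. The basis condition on $g$ passes to every $Q_m$ by uniqueness of the Taylor expansion. The basis criterion then gives full symmetry, so $Q_m=dP_m$ by the polynomial proposition, and the proof of the theorem is rerun.

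You instead apply the criterion pointwise, to $B_u(x,y)=d(x\circ g)(u)(y)$ for each fixed $u\in B_X$. This shows directly that the two definitions of $\mathcal H^\infty_s(B_X,\mathcal L(X,Y))$ coincide as sets. The vector-valued theorem can then be quoted as a black box, with no Taylor-series bookkeeping. The paper's route, in exchange, stays at the level of homogeneous polynomials, matching how the theorem itself is proved, and yields a basis-only version of Proposition~\ref{prop:dif-pol} along the way.

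Two small remarks. First, the continuity you single out as the main obstacle is immediate. Write $x\circ g=\mathrm{ev}_x\circ g$ with $\mathrm{ev}_x(T)=T(x)$, which is continuous and linear. The chain rule then gives $B_u(x,y)=dg(u)(y)(x)$, so $\|B_u\|\le\|dg(u)\|$; the Taylor estimates are unnecessary.

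Second, the sentence ``this expression is symmetric in the roles of $x,y$'' is slightly loose, because swapping $x$ and $y$ reverses the order of the iterated limits. To fix it, use $B_u(x^{(N)},y^{(M)})=B_u(y^{(M)},x^{(N)})$ for the partial sums. Let $M\to\infty$ and then $N\to\infty$, using separate continuity. Alternatively, use the joint continuity of a bounded bilinear map.
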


\subsection*{Acknowledgments}The research of Richard Aron was partially supported by  PID2021-122126NB-C33/MCIN/AEI/ 10.13039/ 501100011033\- (FEDER). 
 The research of Verónica Dimant was partially supported by
CONICET PIP 11220200101609CO and UdeSA-PAI 2025. 
 The research of Manuel Maestre was partially supported by   PID2021-122126NB-C33/MCIN/AEI/10.13039/ 501100011033\- (FEDER) and  GV PROMETEU/2021/070.

\end{document}